\definecolor{Purple}{rgb}{.7,0.08,0.6} 
\theoremstyle{plain}
\newtheorem{Thm}{Theorem}
\newtheorem{Cor}[Thm]{Corollary}
\newtheorem{Prop}[Thm]{Proposition}
\newtheorem{Lem}[Thm]{Lemma}
\theoremstyle{definition}
\newcommand{\Dm}{D}
\newcommand{\rg}{\rho_\gamma}
\newcommand{\Cino}{\mathbf C}
\newcommand{\Do}{D}
\renewcommand{\Do}{D}
\newcommand{\ro}{\rho}
\newcommand{\rt}{\rho}
\renewcommand{\r}{\rho}
\renewcommand{\Re}{\operatorname{Re}}%real part
\renewcommand{\bar}{\overline}%bar (wide version often looks better)
\renewcommand{\tilde}{\widetilde}%tilde (wide version often looks better)
\newcommand{\C}{\mathbb C}%complex #s
\newcommand{\Cin}{\mathbf C}
\newcommand{\Ctr}{\mathcal C}
\newcommand{\Ein}{\mathbf{E}}
\renewcommand{\iota}{s}
\newcommand{\Rin}{\mathbf{R}}
\newcommand{\N}{\mathbb N}%natural
\newcommand{\dee}{\partial}%dee
\newcommand{\deebar}{\overline\partial}%deebar
\renewcommand{\N}{\nu}
\newcommand{\bndry}{b}%boundary
\begin{document}
\title
[Cauchy-Leray Integrals]{The role of an integration identity\\
in the analysis of \\
the Cauchy-Leray Transform}
\author[Lanzani and Stein]{Loredana Lanzani$^*$
and Elias M. Stein$^{*}$}
\thanks{$^*$ This material is based upon work supported in part by the National Science Foundation under awards No.
DMS-1503612 (Lanzani) and DMS-1265524 (Stein), while both authors were in residence at the Mathematical Sciences Research Institute in Berkeley, California, during the Spring 2017 semester.}
\address{
Dept. of Mathematics,       
Syracuse University 
Syracuse, NY 13244-1150 USA}
  \email{llanzani@syr.edu}
\address{
Dept. of Mathematics\\Princeton University 
\\Princeton, NJ   08544-100 USA }
\email{stein@math.princeton.edu}
  \thanks{2000 \em{Mathematics Subject Classification:} 30E20, 31A10, 32A26, 32A25, 32A50, 32A55, 42B20, 
46E22, 47B34, 31B10}
\thanks{{\em Keywords}: Hardy space; Cauchy Integral; 
 Cauchy-Szeg\H o projection;  Lebesgue space; pseudoconvex domain; minimal smoothness; Leray-Levi measure}
\begin{abstract} 

The purpose of this paper is to complement the results in \cite{LS-1} by showing the dense definability 
of the Cauchy-Leray transform for the domains that give the counterexamples of \cite{LS-1}, where 
$L^p$-boundedness is shown to fail when either the ``near'' $C^2$ boundary regularity, or the
strong $\mathbb C$-linear convexity assumption is dropped.
\end{abstract}
\maketitle
\centerline{\em Dedicated to the memory of Professor Minde Cheng}
\centerline{\em  on the occasion of the centenary of his birth}

\section{Introduction}
The purpose of this paper is to complement the previous results in \cite{LS-1} and \cite{LS-2} which deal with the $L^p$-boundedness of the Cauchy-Leray integral in $\mathbb C^n$, $n\geq 2$, proved under optimal regularity assumptions and geometric restrictions. 

If $D$ is a suitable (convex) domain in $\mathbb C^n$ then we can define the Cauchy-Leray integral \cite{LS-2} $\Cin (f)$ for an appropriate function $f$ given on the boundary $\bndry D$ of $D$. The integral $\Cin (f)(z)$ is defined for $z\in D$ and has the following properties:
 first, $\Cin (f)(z)$ is always holomorphic for $z\in D$; and second, if $f=F\big|_{\bndry D}$ where $F$ is continuous in $\bar D$ and holomorphic in $D$, then $\Cin$ reproduces $f$, i.e. $\Cin(f)(z) = F(z)$, for $z\in D$.

The well-known theory of the Cauchy integral in $\mathbb C$ (see \cite{C}, \cite{CMM}, \cite{Da}) and in particular the classical theorem of M. Riesz for the unit disc, raise the question of the corresponding $L^p$-boundedness of the Cauchy-Leray integral for $n\geq 2$, and more particularly determining the optimal conditions both on the regularity of the domain $D$ and the nature of the convexity of $D$, for which this boundedness holds. To be precise, suppose $f$ is a 
$C^1$ function on the boundary of $D$, then under the circumstances detailed below
we can define the induced ``Cauchy-Leray transform'' of $f$, $\Ctr(f)$, as a function on the boundary of $D$ given by
\begin{equation}\label{E:star1}
\Ctr (f) =\Cin (f)\bigg|_{\bndry D}\, ,
\end{equation}
and it is proved in \cite{LS-2} that the mapping $f\mapsto \Ctr (f)$ extends to a bounded transformation on $L^p(\bndry D, d\sigma)$, $1<p<\infty$, where $d\sigma$ is the induced Lebesgue measure on $\bndry D$. This assertion holds under the following two conditions:
\begin{itemize}
\item[]
\item[{\tt (i)}] The boundary has regularity ``near'' $C^2$, in the sense that $\bndry D$ is of class $C^{1,1}$.
\item[]
\item[{\tt (ii)}] The boundary is ``strongly $\mathbb C$-linearly convex''.
\item[]
\end{itemize}

The convexity condition {\tt (ii)} is weaker than strong convexity (the strict positive-definiteness of the real quadratic fundamental form at each boundary point); however it is stronger than strong-pseudoconvexity (the strict positivity of the Levi form).

In \cite{LS-1} two simple counter-examples, elucidating the necessary nature of both conditions were given. These are in terms of two elementary domains in $\mathbb C^2$. 
With $z=(z_1, z_2)$, and $z_1=x_1+\, i\,y_1$ they are given by 
\begin{equation}\label{E:star2}
\{z\ :\ |z_2|^2 + x_1^2+y_1^4<1\}
\end{equation}

and

\begin{equation}\label{E:star3}
\{z\ :\ |z_2|^2 + |x_1|^m+y_1^2<1\}\, \quad \text{where}\quad 1<m<2\, .
\end{equation}
\medskip

Equivalently, we could replace $z_2$ by $z_2-i$, as we did in \cite{LS-1}.
The first domain \eqref{E:star2} has a $C^\infty$ (in fact real-analytic) boundary, is strongly pseudo-convex but not strongly $\mathbb C$-linearly convex. The second domain \eqref{E:star3} is of class $C^{2-\epsilon}$, with $\epsilon = 2-m$, but on the other hand, is strongly $\mathbb C$-linearly convex.

For both domains we proved in \cite{LS-1} that $L^p$-boundedness failed for all $p$,
 $1\leq p\leq \infty$, in the following sense. Whenever $f$ is a bounded function on $\bndry D$ supported on a proper subset of the boundary, $\Cin (f)(z)$ can be defined (as an absolutely convergent integral) whenever $z\in\bndry D$ is at a positive distance from the support of $f$, (which we still denote by $\Cin (f)(z)$). One might ask if there is an inequality of the form 
 
 \begin{equation}\label{E:star4}
 \|\Cin (f)\|_{L^p(S)}\ \leq A_p\|f\|_{L^p(\bndry D)}
 \end{equation}

whenever $S$ is a subset of $\bndry D$ disjoint from the support of $f$, with the bound in \eqref{E:star4} independent of $f$ or $S$, and with the underlying measure the induced Lebesgue measure. This was shown in \cite{LS-1} to fail  for both domains for all $p$, $1\leq p\leq \infty$.

In order to define the Cauchy-Leray transform for the domains that satisfy {\tt(i)} and {\tt(ii)}, it was shown in \cite{LS-2} that whenever $f$ is of class $C^1$ on the boundary, then
$\Cin (f)(z)$ extends to a continuous function on $\bar D$. It is our purpose here to demostrate that a similar assertion holds for the counter-example domains \eqref{E:star2}
and \eqref{E:star3} and thus the induced Cauchy-Leray transform $f\mapsto \Ctr (f)$, initially defined for $C^1$-functions, does not extend to a bounded operator on $L^p$, for any $p$, for both domains \eqref{E:star2} and \eqref{E:star3}.

Restricting ourselves to the case $n=2$, one can write the Cauchy-Leray integral as

\begin{equation}\label{E:star5}
\Cin (f) (z) \ =\ \int\limits_{w\in\bndry D}\!\!\!\! \frac{f(w)}{(\Delta (w, z))^2}\, d\lambda (w)
\end{equation}

where $\Delta (w, z)$ is the non-hermitian pairing 
$$\langle \dee\r (w), w-z\rangle\ =\ \sum\limits_{j=1}^2\frac{\dee\r (w)}{\dee w_j}\, (w_j-z_j),$$ with $\r$
a defining function of $D$, and $d\lambda$ the corresponding Leray-Levi measure.

For the domains that satisfy {\tt (i)} and {\tt (ii)}, the continuity of $\Cin (f)(z)$, $z\in\bar D$ when $f$ is of class $C^1$ on $\bndry D$ can be shown as a consequence of the principle that the Cauchy-Leray kernel (when $n\geq 2$) is a ``derivative''. This is most aptly expressed in the global integration-by-parts performed in the proof of Proposition \ref{P:Dm-case1} below, which can be summarized
as
\begin{equation}\label{E:star6}
\Cin (f)  = \Ein (df) + \Rin (f) + f
\end{equation}

Here the kernels of the operators $\Ein$ and $\Rin$ both have singularities weaker than 
that of $\Cin$ and are in fact absolutely integrable singularities. The one for $\Ein$ is of the order

\begin{equation}\label{E:sing1}
\frac{1}{|\Delta (w, z)|}\, ,
\end{equation}
\smallskip

and the one for $\Rin$ is of the order

\begin{equation}\label{E:sing2}
\frac{|z-w|}{|\Delta (w, z)|^2}\, ,\quad \text{or even}\qquad \frac{|z-w|^2}{|\Delta (w, z)|^2}\, .
\end{equation}

However application of this formalism to the first example, \eqref{E:star2}, is problematic: because of the ``flatness'' of that domain (which can be restated in terms of the higher degree of vanishing of $\Delta(w, z)$  along the diagonal $\{w=z\}$, see \eqref{E:Do-Stricvx}),
the integrability of \eqref{E:sing2} cannot be guaranteed because the numerator in \eqref{E:sing2} does not help to control the  singularities that are away from the diagonal. Thus in this case a different argument is needed, one that uses a local integration by parts, depending on the location of the coordinate patch with respect to the
``flat'' part of the boundary. This is carried out in Proposition \ref{P:CintDo} in Section \ref{S:2} below.

In the second example the difficulty comes from the lack of $C^2$-regularity. Here, however, a simple but critical modification of the argument that was used in \cite{LS-2} to prove an earlier version of \eqref{E:star6} for the ``nice ''domains  works for the most relevant choices of $m$ that pertain the domain \eqref{E:star3}.
 This is carried out in Proposition \ref{P:Dm-case1} and Lemma \ref{L:estimates} in Section \ref{S:3} below.
 
 These results together with what was done in \cite{LS-2} then give us our main conclusion:
 
 \begin{Thm}
 In the cases of the domain \eqref{E:star2}, and the domain \eqref{E:star3} for 
 $3/2<m<2$, for any function $f$ of class $C^1$ on $\bndry D$, the Cauchy integral $\Cin (f)(z)$ extends to a continuous function on $\bar D$. If we set
 $$
 \Ctr (f) \ =\ \Cin (f)\bigg|_{\bndry D}
 $$
 then the mapping $f\mapsto \Ctr (f)$ cannot be extended to a bounded operator on $L^p(\bndry D)$ for any $p$, $1\leq p\leq \infty$.
 \end{Thm}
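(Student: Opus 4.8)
The plan is to split the theorem into its two assertions—the continuous extension of $\Cin(f)$ to $\bar D$ for $C^1$ data, and the impossibility of a bounded $L^p$ extension of $\Ctr$—and to treat the first as the genuine analytic work and the second as a soft consequence of it combined with the negative results of \cite{LS-1}.

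For the continuity I would exploit the principle that the Cauchy--Leray kernel is a ``derivative,'' encoded in the identity \eqref{E:star6}, namely $\Cin(f) = \Ein(df) + \Rin(f) + f$. The three terms on the right are harmless: $f$ is already $C^1$, while $\Ein$ and $\Rin$ carry only the weaker, absolutely integrable singularities \eqref{E:sing1} and \eqref{E:sing2}. For $f$ of class $C^1$ one then obtains a continuous extension of each integral term to $\bar D$ by dominated convergence as $z$ tends to $\bndry D$, the integrable majorant being uniform up to the boundary; summing yields the continuous extension of $\Cin(f)$. The essential point is that this global integration by parts is directly available only for the domain \eqref{E:star3}, and only when $3/2 < m < 2$ guarantees the requisite integrability; this is the content of Proposition \ref{P:Dm-case1} together with the estimates of Lemma \ref{L:estimates}. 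For the flat domain \eqref{E:star2} the numerator in \eqref{E:sing2} cannot tame the off-diagonal singularities, so I would instead invoke the localized integration by parts of Proposition \ref{P:CintDo}, decomposing $\bndry D$ according to the position of the coordinate patch relative to the flat locus and tracking the higher degree of vanishing of $\Delta(w,z)$ along the diagonal recorded in \eqref{E:Do-Stricvx}.

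Granting the continuity, $\Ctr(f) := \Cin(f)\big|_{\bndry D}$ is a well-defined continuous function on $\bndry D$ for every $f \in C^1(\bndry D)$, and for $z \in \bndry D$ at positive distance from $\operatorname{supp} f$ it agrees with the absolutely convergent integral $\Cin(f)(z)$ available there. I would then argue by contradiction: were $\Ctr$ to extend to a bounded operator on $L^p(\bndry D, d\sigma)$, then for every $C^1$ function $f$ supported in a proper subset and every $S \subset \bndry D$ disjoint from $\operatorname{supp} f$ one would have $\|\Cin(f)\|_{L^p(S)} = \|\Ctr(f)\|_{L^p(S)} \le \|\Ctr(f)\|_{L^p(\bndry D)} \le A_p \|f\|_{L^p(\bndry D)}$, with $A_p$ the operator norm. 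This is precisely the estimate \eqref{E:star4}, and since \cite{LS-1} shows that \eqref{E:star4} fails for both domains and for all $p \in [1,\infty]$ (the extremizing configurations there being, or being approximable by, $C^1$ data), no such bounded extension can exist.

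The main obstacle lies entirely in the continuity step for the flat domain \eqref{E:star2}: the off-the-shelf global identity \eqref{E:star6} is unavailable, and one must instead perform the integration by parts locally while carefully controlling how the degree of vanishing of $\Delta(w,z)$ interacts with the boundary geometry, so that the surviving singularities remain absolutely integrable. A secondary technical matter is verifying for \eqref{E:star3} that the range $3/2 < m < 2$ is exactly what renders the relevant singular integrals convergent, which is why the theorem restricts $m$ to this interval rather than the full range $1<m<2$.
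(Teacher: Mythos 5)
Your proposal is correct and follows essentially the same route as the paper: the continuity assertion is exactly the content of Proposition \ref{P:CintDo} for the domain \eqref{E:star2} (local integration by parts to handle the flat direction) and of Proposition \ref{P:Dm-case1} with Lemma \ref{L:estimates} for the domain \eqref{E:star3} in the range $3/2<m<2$ (the global identity \eqref{E:star6}), while the non-extendability on $L^p$ is deduced, just as you argue, from the failure of \eqref{E:star4} established in \cite{LS-1}.
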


There are two additional comments to make. First, since the main interest of the second example is the class $C^{2-\epsilon}$ with $\epsilon$ small, this clearly falls within the restriction $3/2<m<2$. Second, there is a weaker conclusion that covers the full range
$1<m<2$. This is stated in Proposition \ref{P:Dm-case2} below.

\section{The first example}\label{S:2}
Writing $z_j=x_j+i\,y_j$ for $j=1, 2$, our first example is the domain \eqref{E:star2}, that is

\begin{equation*}
\Do \ =\ \{(z_1, z_2)\ :\ |z_2|^2 + x_1^2 + y_1^4\ <\ 1\, \}.
\end{equation*}
\subsection{The Cauchy-Leray integral for the domain \eqref{E:star2}}\label{SS:Do-4} Recall that $\Do$ is strictly convex and this grants
\begin{equation}\label{E:Do-Stricvx}
(\dee\ro (w), w-z)_{\mathbb R} \geq 
\end{equation}
\begin{equation*}
\geq\ (x_1-u_1)^2 + (x_2-u_2)^2 + (y_2-v_2)^2 + (v_1^2+y_1^2)(v_1-y_1)^2
\end{equation*}

\noindent when $w, z\in\bndry \Do$.
From this strict convexity it follows that $\Do$ supports the Cauchy-Leray
integral
\begin{equation}\label{E:Cin-o}
\Cino f(z) =\frac{1}{(2\pi i)^2}\,\int\limits_{w\in\bndry \Do}\!\!\!\!\! f(w)\,j^*\frac{(\dee\ro\wedge\deebar\dee\ro)(w)}
{\langle\dee\ro(w), w-z\rangle^2}
\, , \quad z\in \Do\, ,
\end{equation}
where $j^*$ denotes the pullback via the inclusion $j\!:\bndry \Do\hookrightarrow \C^2$,
and in fact with respect to the induced Lebesgue measure $d\sigma$, we have
\begin{equation*}
\Cino f(z) =
%\frac{1}{(2\pi i)^2}\,
\int\limits_{w\in\bndry \Do}\!\!\!\!\! \frac{f(w)\,
\Gamma (w)}
{\langle\dee\ro(w), w-z\rangle^2}\, d\sigma (w)
\, , \quad z\in \Do\, ,
\end{equation*}
with $\Gamma$ a smooth, strictly positive function on $\bndry D$, see \cite{LS-1}.
\begin{Prop}\label{P:CintDo}
Suppose that $f\in C^1(\bndry\Do)$. Then $\Cino f(z)$ extends to a continuous function on $\bar \Do$.
\end{Prop}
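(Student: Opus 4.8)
The plan is to establish the continuous extension locally at each boundary point, since for $z$ in the interior $\Cino f$ is automatically holomorphic, hence continuous. First I would fix a smooth partition of unity $\{\chi_\nu\}$ on $\bndry\Do$ subordinate to a finite cover by small coordinate patches and write $f=\sum_\nu\chi_\nu f$; by linearity it suffices to prove the claim for a single $f$ supported in one patch $U_\nu$. For a target point $z_0\in\bndry\Do$ lying at positive distance from $U_\nu$, the kernel $\langle\dee\ro(w),w-z\rangle^{-2}$ is smooth in $z$ for $z$ near $z_0$ and $w\in U_\nu$, so $\Cino f$ is manifestly smooth, in particular continuous, there. Thus the entire difficulty is concentrated at points $z_0$ lying in, or arbitrarily close to, the support of $\chi_\nu f$, and by refining the cover I may assume each patch lies either entirely away from the flat locus $\{v_1=0\}$ or entirely within a small neighborhood of it.

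Away from the flat locus, say $|v_1|\ge c>0$ on $U_\nu$, the degenerate summand in \eqref{E:Do-Stricvx} satisfies $(v_1^2+y_1^2)(v_1-y_1)^2\ge c^2(v_1-y_1)^2$, so that $\Re\langle\dee\ro(w),w-z\rangle$ controls the full quadratic $|w-z|^2$ and $\bndry\Do$ is strongly convex there in the non-degenerate sense. This is precisely the regime treated in \cite{LS-2}: the global integration by parts applies and produces the decomposition \eqref{E:star6}, $\Cino f=\Ein(df)+\Rin(f)+f$, whose error kernels \eqref{E:sing1} and \eqref{E:sing2} are, by the anisotropic estimates of \cite{LS-2}, absolutely integrable uniformly for $z$ ranging over $\bar\Do$ near $z_0$. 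Since $f$ is continuous and $df$ bounded, dominated convergence lets me pass to the limit $z\to z_0$ and yields continuity of $\Cino f$ there.

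The remaining and genuinely hard case is a patch $U_\nu$ clustered about the flat locus. Here \eqref{E:sing2} is hopeless: along the degenerate direction $\langle\dee\ro(w),w-z\rangle$ vanishes to fourth order (since $\dee\ro/\dee w_1=u_1-2iv_1^3$ and the quartic $y_1^4$ flattens the boundary), so neither $|w-z|\,|\langle\dee\ro(w),w-z\rangle|^{-2}$ nor $|w-z|^2\,|\langle\dee\ro(w),w-z\rangle|^{-2}$ is integrable, the numerator being unable to compensate for the loss away from the diagonal. My plan is therefore to replace the global integration by parts by a local one adapted to the explicit defining function: rather than differentiating the kernel in all directions at once, I would integrate by parts only in the non-degenerate variables (the $w_2$-directions and the transverse $x_1$-direction), leaving the flat variable $v_1$ undifferentiated. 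The aim is that each derivative landing on $\langle\dee\ro(w),w-z\rangle^{-1}$ in a non-degenerate direction trades one power of the kernel for a bounded factor $df$, so the error terms carry only the weaker singularity \eqref{E:sing1} of order $|\langle\dee\ro(w),w-z\rangle|^{-1}$, which survives the flattening, and no spurious second factor of $|\langle\dee\ro(w),w-z\rangle|^{-1}$ is generated in the degenerate direction.

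The main obstacle is precisely the bookkeeping in this flat-region integration by parts: I must choose the vector fields along which to integrate so that, after pulling back to $\bndry\Do$ and applying Stokes' theorem on the closed (boundaryless) manifold $\bndry\Do$, every error term is an operator with an absolutely integrable kernel, uniformly for $z\in\bar\Do$ near $z_0$. This requires weighing the orders of vanishing of $\langle\dee\ro(w),w-z\rangle$, of its $w$-Hessian, and of the induced measure in the degenerate direction against one another, using the strong pseudoconvexity $\dee^2\ro/\dee\bar w_1\dee w_1=\tfrac12+3v_1^2\ge\tfrac12$ to retain transverse control while the convexity is only quartic along $v_1$. Once a decomposition of the form $\Cino f=f+(\text{integrable-kernel operators applied to }f\text{ and }df)$ is in force on each flat patch, dominated convergence again supplies the limit as $z\to z_0$, and summing the contributions of the finitely many patches shows that $\Cino f$ extends continuously to all of $\bar\Do$.
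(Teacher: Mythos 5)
Your overall strategy --- localize with a partition of unity and, near the flat locus, replace the global integration by parts of \cite{LS-2} by a local one that lowers the kernel from $\Delta^{-2}$ to $\Delta^{-1}$ --- is the route the paper announces and takes, but you have deferred precisely the two steps that constitute the proof. First, you never identify a direction in which the local integration by parts can actually be performed: to write $\Delta^{-2}=-\big(\dee\Delta/\dee u\big)^{-1}\,\tfrac{\dee}{\dee u}\big(\Delta^{-1}\big)$ you need a tangential coordinate $u$ along which $\dee\Delta/\dee u$ is bounded \emph{away from zero} on the patch, not merely bounded above. Your proposed ``$w_2$-directions and the transverse $x_1$-direction'' do not all qualify: at a point $\zeta$ whose normal is the $v_2$-direction one has $\dee\rho/\dee u_1(\zeta)=0$, so $\dee\Delta/\dee u_1=O(\delta)$ there and the $x_1$-integration by parts degenerates. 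The paper's Lemma establishing \eqref{E:7} shows that, in coordinates adapted to $\zeta$, the single direction $u_2=\Re w_2$ (the ``complex-normal'' tangential direction) works at \emph{every} boundary point, flat or not, because $\langle\dee\rho(\zeta),\dee w/\dee u_2\rangle=-\tfrac{i}{2}|\nabla\rho(\zeta)|+O(\delta)$ is bounded below in modulus; one integration by parts in that one variable (Corollary \ref{C:E8}) already produces the $\Delta^{-1}$ kernel, and no case distinction between flat and non-flat patches is needed for this step.

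Second, you assert without proof that the $\Delta^{-1}$ singularity ``survives the flattening'' and that dominated convergence then yields the boundary limit. The integrability claim is the quantitative heart of the matter: the paper proves \eqref{E:10}, namely $\int_{\bndry\Do}\big(\Re\Delta(w,z)\big)^{-1-\beta}\,d\sigma(w)\le C_\beta$ uniformly in $z$ for $0\le\beta<1/4$, by reducing via \eqref{E:Do-Stricvx} and a case analysis on the dominant component of the normal to the one-dimensional integral $\int|v_1|^{-2\beta}|v_1-y_1|^{-2\beta}\,dv_1$. Moreover the $\beta=0$ case alone does not suffice: $\Delta(w,z')^{-1}$ admits no single integrable majorant as $z'$ ranges over a boundary neighborhood of $z_0$ (its singularity sits at the moving point $w=z'$), so dominated convergence does not directly give continuity up to the boundary. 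One needs the positive exponent $\beta$ to obtain the uniform difference estimate \eqref{E:13}, $\int\big|\Delta(w,z_\epsilon)^{-1}-\Delta(w,z)^{-1}\big|\,d\sigma(w)\lesssim\epsilon^\beta$, from which the uniform convergence of $\Cino f(z_\epsilon)$ follows. As written, your submission is a correct plan whose ``main obstacle'' paragraph is exactly the missing proof.
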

This proposition allows us to define the induced Cauchy-Leray transform $\Ctr (f)$, at least
initially for $f$ that are of class $C^1$ on $\bndry \Do$, by
\begin{equation}\label{E:Ctro-lim}
\Ctr f \ =\ \Cino f\big|_{\bndry \Do}\, .
\end{equation}
\subsection{Proof of Proposition \ref{P:CintDo}}
 Throughout this section we shall simplify the notation slightly by writing $\rt$ instead of $\ro$, and by setting 
  $$
  \Delta (w, z) = \langle \dee\rt (w), w-z\rangle\, .
  $$
  
  We need first a local representation of the Cauchy-Leray integral, which was the idea that 
  $\Delta (w, z)$ arises as a ``derivative''. 
  
  We fix a point $\zeta\in\bndry\Do$, and we will restrict attention to points $z\in\bar\Do$, $w\in\bndry\Do$ both near $\zeta$. We can introduce a new coordinate system, centered at $\zeta$, so that we pass from the original
   coordinates in $\mathbb C^2$ to the present coordinates by a translation and a unitary
  linear transformation, and so that in the new coordinates $w=(w_1, w_2)$, $w_j=u_j+iv_j$, the point $\zeta$ has coordinates $(0, 0)$ and 
  $$
  \frac{\dee\rho}{\dee v_2}(\zeta) = |\nabla \rho (\zeta)|\, ,\ \text{while}\quad
  \frac{\dee\rho}{\dee u_1}(\zeta)= \frac{\dee\rho}{\dee v_1}(\zeta)= \frac{\dee\rho}{\dee u_2}(\zeta)=0.
  $$
Since then the tangent space to $\bndry\Do$ at $\zeta$ is given by $\{(u_1+iv_1, u_2)\}$ we can
express $\bndry\Do$ near $\zeta$ as a graph $v_2=\Phi (u_1, v_1, u_2)$, with $\Phi (0, 0, 0)=0$ and
$\nabla\Phi(0, 0, 0)=(0, 0, 0)$. In particular if $w=(w_1, w_2)\in\bndry \Do$ is close to $\zeta$ with say, 
$|w-\zeta|<\delta$ then
$$
\frac{\dee w_2}{\dee u_2}\ =\ 1 +\ i\ \frac{\dee \Phi}{\dee u_2}\ =\ 1+i\, O(\delta).
$$

\begin{Lem} We have
\begin{equation}\label{E:7}
\frac{\dee \Delta}{\dee u_2} (w, z) \neq 0\, ,\ \text{if}\ \ |w-\zeta|<\delta\, ,\ \text{and}\ \ |z-\zeta|<\delta
\end{equation}
when $\delta$ is sufficiently small.
\end{Lem}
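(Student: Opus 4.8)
The plan is to expand $\dee\Delta/\dee u_2$ into its constituent pieces and show that exactly one of them is bounded away from zero while all the others are $O(\delta)$, so that for $\delta$ small the sum cannot vanish. Writing $A_j(w) = \dee\rt/\dee w_j(w)$, so that $\Delta(w, z) = A_1(w)(w_1 - z_1) + A_2(w)(w_2 - z_2)$, I would first record that on the graph the coordinate $w_1 = u_1 + iv_1$ is independent of $u_2$ whereas $w_2 = u_2 + i\Phi(u_1, v_1, u_2)$ is not. Differentiating in $u_2$ with $z$ held fixed therefore gives
\begin{equation*}
\frac{\dee\Delta}{\dee u_2}(w, z) = \frac{\dee A_1}{\dee u_2}(w_1 - z_1) + \frac{\dee A_2}{\dee u_2}(w_2 - z_2) + A_2(w)\,\frac{\dee w_2}{\dee u_2}.
\end{equation*}

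The first two terms each carry a factor $w_j - z_j$; since $|w - \zeta| < \delta$ and $|z - \zeta| < \delta$ these factors are $O(\delta)$, and because $\bndry\Do$ is smooth (indeed real-analytic) the coefficients $\dee A_j/\dee u_2$ are bounded on a neighborhood of $\zeta$, so these two terms together contribute only $O(\delta)$. The decisive term is the last one. Using $\dee/\dee w_2 = \tfrac12(\dee/\dee u_2 - i\,\dee/\dee v_2)$ together with the normalization of the coordinates at $\zeta$, namely $\dee\rho/\dee v_2(\zeta) = |\nabla\rho(\zeta)|$ and $\dee\rho/\dee u_2(\zeta) = 0$, I obtain
\begin{equation*}
A_2(\zeta) = \frac{\dee\rt}{\dee w_2}(\zeta) = -\frac{i}{2}\,|\nabla\rho(\zeta)|,
\end{equation*}
which is nonzero because $\rho$ is a defining function, so $|\nabla\rho(\zeta)| > 0$. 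By continuity of the smooth coefficient, $A_2(w) = -\tfrac{i}{2}|\nabla\rho(\zeta)| + O(\delta)$, and by the graph estimate already recorded in the excerpt, $\dee w_2/\dee u_2 = 1 + i\,O(\delta)$; hence $A_2(w)\,\dee w_2/\dee u_2 = -\tfrac{i}{2}|\nabla\rho(\zeta)| + O(\delta)$.

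Combining the three contributions yields $\dee\Delta/\dee u_2(w, z) = -\tfrac{i}{2}|\nabla\rho(\zeta)| + O(\delta)$, and since the leading constant is a fixed nonzero imaginary number, the expression is nonzero as soon as $\delta$ is chosen small enough. I do not expect any serious obstacle here: the one point that requires care is keeping the $O(\delta)$ estimates uniform simultaneously in $w$ and in $z$, which is where one invokes that all first and second derivatives of the smooth defining function are bounded, and that $|\nabla\rho|$ is bounded below, on a fixed neighborhood of $\zeta$. The conceptual content is simply that the chosen unitary normalization places the full gradient of $\rho$ in the $w_2$-direction, so that the $u_2$-derivative of $\Delta$ inherits the nonvanishing of $\dee\rt/\dee w_2(\zeta)$ up to corrections that are small precisely because $w$ and $z$ are both close to $\zeta$.
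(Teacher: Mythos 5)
Your proof is correct and follows essentially the same route as the paper: both split $\dee\Delta/\dee u_2$ into the terms where the derivative falls on the coefficients $\dee\rho/\dee w_j$ (which are $O(\delta)$ because of the factors $w_j-z_j$) and the transport term $\langle\dee\rho(w),\dee w/\dee u_2\rangle$, which is pinned down at $\zeta$ to equal $-\tfrac{i}{2}|\nabla\rho(\zeta)|+O(\delta)\neq 0$. The only cosmetic difference is that the paper isolates $\langle\dee\rho(w)-\dee\rho(\zeta),\dee w/\dee u_2\rangle$ as a separate $O(\delta)$ term where you absorb it via the Lipschitz continuity of $A_2$; the content is identical.
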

\begin{proof}
$$
\frac{\dee \Delta}{\dee u_2} (w, z) \ =\ I\ +\ II\ +\ III
$$
where
$$
 I\ =\ \bigg\langle\frac{\dee}{\dee u_2}\dee\rho (w),\, w-z\bigg\rangle
$$
and thus $|\, I\, |\leq c\, \delta$\  because $|w-z|\leq 2\delta$;
and
$$
II\ =\ \bigg\langle\dee\rho (w)-\dee\rho (\zeta), \frac{\dee w}{\dee u_2} \bigg\rangle
$$
and therefore $|\, II\, |\leq c\, \delta$\  because $|w-\zeta|\leq \delta$.
However
$$
III = \bigg\langle\dee\rho (\zeta), \frac{\dee w}{\dee u_2}\bigg\rangle\ =\ 
\bigg[0,\frac12\left(\frac{\dee\rho}{\dee u_2}(\zeta)-i \frac{\dee\rho}{\dee v_2}(\zeta)\right)\bigg]\cdot \bigg[0, \big(1+ i\,O(\delta)\big)\bigg] =
$$
$$
=\ -i \,|\nabla\rho (\zeta)|\, \big(1+ i\,O(\delta)\big)\, =\ -i\,\frac{|\nabla\rho (\zeta)|}{2} + O(\delta).
$$

Thus $|\, III\, | \geq |\nabla\rho (\zeta)|>c_1>0$ and so \eqref{E:7} is established.
\end{proof}

For $w$ in the above neighbrhood of $\zeta$, let us write 
$$
%d\lambda (w)
\Gamma(w)\, d\sigma (w)\
 = \Lambda (u_1, v_1, u_2)\, du_1\, dv_1\, du_2\, ,\ \ \text{and}\ \ \gamma(w) =
\displaystyle{\frac{\Lambda (w)}{\frac{\dee \Delta}{\dee u_2}(w, z)}}\, .
$$
Then $\gamma (w)$ is a smooth function (uniformly in $z$ for $|z-\zeta|<\delta$). 

\begin{Cor}\label{C:E8}
 Suppose $f$ is a $C^1$ function on $\bndry\Do$ supported in the above neighborhood of 
$\zeta$. If $|z-\zeta|<\delta$, $z\in\Do$, then
\begin{equation}\label{E:8}
\Cino f (z) \ =\ \int\limits_{\bndry\Do}\!
\frac{1}{\Delta (w, z)}\, \frac{\dee}{\dee u_2}\!\bigg(f(w)\cdot \gamma (w)\bigg) du_1\, dv_1\, du_2\, .
\end{equation}
\end{Cor}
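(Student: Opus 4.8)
The goal is Corollary~\ref{C:E8}: the identity
\begin{equation*}
\Cino f(z) \ =\ \int\limits_{\bndry\Do}\! \frac{1}{\Delta(w,z)}\,\frac{\dee}{\dee u_2}\!\bigl(f(w)\,\gamma(w)\bigr)\,du_1\,dv_1\,du_2\,,
\end{equation*}
valid for $C^1$ functions $f$ supported in the coordinate neighborhood of $\zeta$ and for $z\in\Do$ with $|z-\zeta|<\delta$. The plan is to start from the Lebesgue-measure form of the Cauchy-Leray integral recorded just above the Proposition, namely $\Cino f(z)=\int_{\bndry\Do} f(w)\,\Gamma(w)\,\Delta(w,z)^{-2}\,d\sigma(w)$, and to recognize the kernel $\Delta^{-2}$ as a $u_2$-derivative of $\Delta^{-1}$.

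First I would observe that, because $z\in\Do$ while $w$ ranges over $\bndry\Do$, strict convexity (the estimate \eqref{E:Do-Stricvx}) keeps $\Delta(w,z)$ bounded away from zero on the support of $f$ for each fixed interior $z$; hence there is no singularity and every manipulation below is on a genuinely smooth, compactly supported integrand. The key algebraic fact is
\begin{equation*}
\frac{\dee}{\dee u_2}\!\left(\frac{1}{\Delta(w,z)}\right)\ =\ -\,\frac{1}{\Delta(w,z)^2}\,\frac{\dee\Delta}{\dee u_2}(w,z)\,,
\end{equation*}
so that, dividing the Lebesgue-form kernel by $\dee\Delta/\dee u_2$ — which is legitimate precisely because the Lemma guarantees $\dee\Delta/\dee u_2\neq0$ on the relevant neighborhood — we get
\begin{equation*}
\frac{\Gamma(w)}{\Delta(w,z)^2}\,d\sigma(w)\ =\ -\,\gamma(w)\,\frac{\dee}{\dee u_2}\!\left(\frac{1}{\Delta(w,z)}\right)du_1\,dv_1\,du_2\,,
\end{equation*}
where I have used the definitions $\Gamma\,d\sigma=\Lambda\,du_1\,dv_1\,du_2$ and $\gamma=\Lambda/(\dee\Delta/\dee u_2)$ to convert the measure. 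Thus $\Cino f(z)=-\int_{\bndry\Do} f(w)\,\gamma(w)\,\frac{\dee}{\dee u_2}\bigl(\Delta^{-1}\bigr)\,du_1\,dv_1\,du_2$.

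Next I would integrate by parts in the $u_2$ variable, moving the derivative off the factor $\Delta^{-1}$ and onto $f\gamma$. Since $f$ is $C^1$ and supported strictly inside the coordinate patch, the boundary terms of the $u_2$-integration vanish, and the Leibniz rule produces
\begin{equation*}
\Cino f(z)\ =\ \int\limits_{\bndry\Do}\!\frac{1}{\Delta(w,z)}\,\frac{\dee}{\dee u_2}\!\bigl(f(w)\,\gamma(w)\bigr)\,du_1\,dv_1\,du_2\,,
\end{equation*}
which is exactly \eqref{E:8}. The smoothness of $\gamma$ asserted before the Corollary — itself a consequence of the Lemma, since dividing the smooth density $\Lambda$ by the non-vanishing smooth quantity $\dee\Delta/\dee u_2$ yields something smooth uniformly in $z$ — is what makes $\dee(f\gamma)/\dee u_2$ a bona fide continuous integrand.

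The only genuine subtlety, rather than an obstacle, is bookkeeping: one must be careful that the graph parametrization $v_2=\Phi(u_1,v_1,u_2)$ is used consistently so that $\dee/\dee u_2$ is the honest partial derivative along the boundary in these coordinates, and that the Jacobian conversion absorbed into $\Lambda$ is the correct one. Because $z$ is interior and $f$ compactly supported in the patch, I expect no analytic difficulty at all in this Corollary; the work of controlling the singularity as $z\to\bndry\Do$ is deferred to the proof of the Proposition, where \eqref{E:8} will be the starting representation.
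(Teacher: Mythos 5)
Your proof is correct and follows essentially the same route as the paper: rewrite $\Delta^{-2}$ as $-\bigl(\dee\Delta/\dee u_2\bigr)^{-1}\dee_{u_2}\bigl(\Delta^{-1}\bigr)$, absorb the nonvanishing factor $\dee\Delta/\dee u_2$ into the density to form $\gamma$, and integrate by parts in $u_2$ with no boundary terms since $f$ is compactly supported in the patch. The signs and the use of the Lemma to justify the division match the paper's argument exactly.
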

\begin{proof}
$$
\Cino f (z) \ =\ \int\limits_{\bndry\Do}\frac{1}{\Delta^2 (w, z)}\, f(w)\, \Gamma (w)\, d\sigma(w)
$$
Then in the support of $f$, we have 
$$
\frac{1}{\Delta^2 (w, z)}\ =\ -\,\frac{1}{\displaystyle{\frac{\dee \Delta}{\dee u_2}(w, z)}}\cdot \frac{\dee}{\dee u_2}\left(\frac{1}{\Delta (w, z)}\right)\, .
$$

Inserting this in the above and carrying out the indicated integration by parts in the $u_2$ variable 
then yields \eqref{E:8}.
\end{proof}
To continue we make several other observations.
\medskip

First, whenever $z\in\bndry \Do$, write $z_\epsilon = z+\epsilon\, \N (z)$, where $\N (z)$ is the inward unit normal vector at $z$ and $\epsilon \geq0 $, that is
$$
\N(z) = \ -\, \frac{\nabla\r (z)}{|\nabla\r (z)|}.
$$

   Assume as before that $w, z\in\bndry \Do$, 
$|z-w|\leq 2\delta$. Then with $\delta$ sufficiently small
\begin{equation}\label{E:9}
\Re \Delta (w, z_\epsilon) \ \geq\  c\epsilon + \Re \Delta (w, z)\,\quad \text{with }\ c>0\, .
\end{equation}
In fact 
\begin{equation}\label{E:9a}
\Delta (w, z_\epsilon) =\Delta (w, z) -\epsilon \langle \dee\rho (w), \N (z)\rangle\, .
\end{equation}

But $\N (z) = \ \N (w) + O(|z-w|)$, thus
$$
\langle \dee\rho (w), \N (z)\rangle \ =\ \langle \dee\rho (w), \N (w)\rangle \ +\ O(|z-w|)\, ,
$$
and altogether 
$$
\Delta( w, z_\epsilon) - \Delta (w, z) \ =\ -\epsilon\, \langle \dee\rho (w) , \N (w)\rangle + O(\epsilon |z-w|)\, .
$$
Since $\Re \langle \dee\rho (w) , \N (w)\rangle = -\, |\nabla \rho (w)\, |$, this yields \eqref{E:9}. 

In fact, if $|w-z|>2\delta$ then the strict convexity of $\Do$ gives that $\Re \Delta (w, z_\epsilon)>\tilde c$\,; by the compactness of $\bar \Do$ if we take $0<c_0<1$ sufficiently small, we may combine  all of the above and conclude that
\begin{equation}\label{E:6a}
\Re \Delta (w, z_\epsilon)\geq c_0\bigg(\epsilon + \Re \Delta (w, z)\bigg)\quad \text{uniformly\ in}\ \ z, w\in\bndry\Do.
\end{equation}
\medskip

The key observation is that 
 \begin{equation}\label{E:10}
 \int\limits_{\bndry\Do}\frac{1}{\left(\Re \Delta (w, z)\right)^{1+\beta}}\, d\sigma (w)\ \leq C_\beta
 \end{equation}
uniformly for $z\in\bndry\Do$, if $0\leq \beta<1/4$, and with $\sigma$ the induced Lebesgue measure
on $\bndry\Do$.

For small fixed $\eta>0$, we may assume that the integration in \eqref{E:10} is over the $\eta$-ball centered at $z$; otherwise the strict convexity of $\Do$ gives that 
$\Re \Delta (w, z)\geq c>0$ if $|w-z|\geq \eta$, and  the compactness of $\bar\Do$ grant that the inequality 
\eqref{E:10} holds when the integration is taken over the complement of the $\eta$-ball about $z$.

To illustrate what is done next, assume that $z$ lies in the ball centered at the origin. Then in the coordinates $z= (z_1, z_2)$ and $w=(w_1, w_2)$, we know by \eqref{E:Do-Stricvx} that in particular
$$
2\Re \Delta (w, z) \geq (x_1-u_1)^2 + (x_2-u_2)^2 + v_1^2 (v_1-y_1)^2\, .
$$

Moreover the tangent space to $\bndry\Do$ at the origin is given by $(u_1+iv_1, u_2)$. So far $w$ is near the origin, $\bndry \Do$ is represented by a graph $(u_1+iv_1, u_2+i\Phi (u_1, v_1, u_2))$ with $\Phi$ a smooth function, and hence the induced measure $d\sigma$ is given by $d\sigma = D(u_1, v_1, u_2)\, du_1\,dv_1\,du_2$, where the density $D$ is bounded. This shows that \eqref{E:10} will be proved as soon as we have that 
\begin{equation}\label{E:11}
\int\limits_B\!\frac{1}{\big(u_1^2+u_2^2 + v_1^2(v_1-y_1)^2\big)^{1+\beta}}\, du_1\, du_2\, dv_1\, <\infty
\ \  \text{uniformly\ in}\ \ y_1
\end{equation}
where $B$ is the unit ball: $\{u_1^2 + v_1^2 + u_2^2<1\}$ in the parameter space.

However 
$$
\int\limits_{\mathbb R^2}\frac{1}{\big(u_1^2 + u_2^2 + A\big)^{1+\beta}}\ du_1\, du_2\ =\
 C_\beta A^{-\beta},\  A>0,\ \ \beta>0
$$
as a simple rescaling $u_1= u_1'A^{1/2}$, $u_2= u_2'A^{1/2}$ shows. So we take 
$A = \big(v_1(v_1-y_1)\big)^2$, and observe that 
$$
\int\limits_{|v_1|\leq 1}\frac{1}{|v_1|^{2\beta}\, |v_1-y_1|^{2\beta}}\, dv_1\, <\infty\, ,\ \ \text{if}\ \ 0\leq \beta<1/4\, ,
$$
and this proves \eqref{E:11} for $0<\beta<1/4$ and thus also for $\beta =0$.
\medskip

We next lift the restriction that $z$ lies in the $\eta$-neighborhood of the origin, and assume instead that 
$z$ lies in the $\eta$-neighborhood of $\zeta$ , for some fixed $\zeta\in\bndry \Do$. Now the tangent
space to $\bndry\Do$ at $\zeta$ is given by $\{w\ :\ (\N (\zeta), w-\zeta)_{\mathbb R}=0\}$, where 
$\N (\zeta)$ is the inner unit normal at $\zeta$. 
\smallskip

\noindent We consider the four real component of $\N (\zeta)$, that we list as $\nu_1, \nu_1', \nu_2, \nu_2'$, which correspond to the the $u_1, v_1, u_2, v_2$ variables, respectively. Denote by $\bar\nu$ a component among these four that has the maximum absolute value. Now there are two cases:
{\em Case 1}:\ \  $\bar\nu\neq \nu_1'$;\ \  {\em Case 2}:\  \ $\bar\nu = \nu_1'$.

Now in {\em Case 1} (which is what happens at $\zeta =0$, since there $\bar\nu = \nu_2 =1$), assume momentarily that $\bar \nu =\nu_1$. Then the tangent space at $\zeta = (\xi_1+i\eta_1, \xi_2 + i\eta_2)$ can be written as 
$$
u_1 -\xi_1 = a(v_1 -\eta_1) +b(u_2-\xi_2) + c(v_2-\eta_2)
$$
with $|a|,\ |b|,\ |c|\leq 1$. So if we take $v_1$, $u_2$ and $v_2$  as independent variables to represent 
$\bndry\Do$ as a graph, we see by \eqref{E:Do-Stricvx} that it suffices to show that
$$
\int\limits_{B'}\!\frac{1}{\big(v_2^2+u_2^2 + v_1^2(v_1-y_1)^2\big)^{1+\beta}}\, dv_2\, du_2\, dv_1\, <\infty
\ \  \text{uniformly\ in}\ \ y_1\, ,
$$
which is the same as \eqref{E:11}, except that $u_1$ has been replaced by $v_2$.
Similarly if $\bar\nu = \nu_2$, or $\bar \nu = \nu_2'$.

Now in {\em Case 2}, when $\bar \nu =\nu_1'$, we represent the tangent space as 
$$
v_2-\eta_2 = 
a'(u_1 -\xi_1) +b'(u_2-\xi_2) + c'(v_2-\eta_2)
$$
and then by \eqref{E:Do-Stricvx} it suffices to see that
$$
\int\limits_{B'}\!\frac{1}{\big(u_1^2+u_2^2 + v_2^2\big)^{1+\beta}}\, du_1\, du_2\, dv_2\, <\infty
$$
which in fact holds for $0\leq \beta<1$. This concludes the proof of \eqref{E:10}.

Returning to the proof of Proposition \ref{P:CintDo}, we will show that whenever $f$ is a $C^1$ function on $\bndry\Do$, then $\Cino f(z_\epsilon)$ converges uniformly for $z\in\bndry\Do$, as $\epsilon\to 0$.
To see this we decompose $f$ as a finite sum 
$$
f = \sum \limits_{j=1}^N f_j\, ,
$$
where each $f_j$ is a $C^1$ function supported in a ball $B_j$ of radius $\eta/2$, centered at
$\zeta^{(j)}\in\bndry\Do$. If $z\in B^*_j$ (the ball with the same center but twice the radius) and 
$\eta$ is sufficiently small, then by Corollary \ref{C:E8}
\begin{equation}\label{E:12}
\Cino f_j(z_\epsilon) \ =\ 
\int\limits_{\bndry\Do}\frac{1}{\Delta (w, z_\epsilon)}\, \frac{\dee}{\dee u_2}(f_j\gamma)\, du_1\, dv_1\, du_2\, .
\end{equation}
Next observe that 
\begin{equation}\label{E:13}
\int\limits_{\bndry\Do}\left|\frac{1}{\Delta (w, z_\epsilon)} - \frac{1}{\Delta (w, z)}\right|\, d\sigma (w)\ \lesssim
 \epsilon^\beta\, .
\end{equation}
Indeed because both $\Re \Delta (w, z_\epsilon)$ and $\Re \Delta (w, z)$ are non-negative and
$\Delta (w, z_\epsilon) - \Delta (w, z) = O(\epsilon)$, see \eqref{E:9a}, the integrand in \eqref{E:13} is dominated
by 
$$
\frac{c\,\epsilon}{\Re \Delta (w, z_\epsilon)\, \Re \Delta (w, z)}\, .
$$

But \eqref{E:9a} tells us more precisely that
$$
\Re \Delta (w, z_\epsilon)\ \geq \ c_0\Re \Delta (w, z)\quad \text{and}\ \ \Re \Delta (w, z_\epsilon)\geq c_0\, \epsilon\, .
$$
As a result, the integrand in \eqref{E:13}  is dominated by a multiple of
$$
\frac{\epsilon^\beta}{(\Re \Delta (w, z))^{1+\beta}},
$$
and we need only invoke \eqref{E:10} to get \eqref{E:13}.
From this and \eqref{E:12} we obtain that $\Cino f_j(z_\epsilon)$ converges uniformly for 
$z\in B^*_j$ as $\epsilon \to 0$. However when $z\notin B^*_j$, then $\Delta (w, z_\epsilon)\neq 0$
 for the relevant $w$ and $z$, and the convergence as $\epsilon\to 0$ is obvious. This gives the desired result for each $f_j$, and hence for their sum, proving Proposition \ref{P:CintDo}.
\section{The second example}\label{S:3} For our second example, the domain \eqref{E:star3},
we choose $$\r (w) = |w_2|^2 + |u_1|^m + v_1^2 -1$$ as a defining function.
Here and in the sequel we make use of the notation: 
$$[u_1] = |u_1|^{m-1}\text{sign} (u_1)\, .$$

The Cauchy-Leray denominator $\Delta (w, z) =\langle \dee\r (w), w-z\rangle$ for $\Dm$
 is then
$$
\Delta (w, z) \ =\ 
$$
$$
\frac12 \big(m[u_1]^{m-1} + 2i  + 2 v_1\big)\big(u_1-v_1 + i(v_1-y_1)\big)\  +\ \big(u_2 -i\, v_2\big)\big(u_2 - x_2 + i(v_2-y_2)\big).
$$
\smallskip

As a preliminary step we decompose the boundary of $\Dm$ into finitely many coordinate patches, either of the first or the second kind. The coordinate patches of the first kind 
are centered at points which lie in the critical variety (where $u_1=0$). Those of the second kind are at a positive distance from that variety. We then decompose our given $C^1$-function $f$ as a sum of $C^1$ functions, each supported on one of these patches. 
For those of the second type, since we are now where matters are regular, we may argue as in Section \ref{S:2}. This reduces matters to the patches of the first kind. Since for these patches we have that $u_1$ is small, then one of the three variables $v_1, u_2, v_2$ must be bounded away from zero. Whichever is can be taken as the dependent variable in the representation of $\bndry\Dm$ as a graph over that patch. For simplicity of notation here we 
assume it is $v_1$, but if it were $u_2$ or $v_2$ instead, then the argument below would be unchanged. Then for $v_1$ (the dependent variable) we have

\begin{equation}\label{E:v_1}
v_1 \ =\ \big(1-(|u_1|^m +|w_2|^2)\big)^{1/2}\ \approx\  1-\frac12 \bigg(|u_1|^m + |w_2|^2\bigg)\, .
\end{equation}
The following basic estimates for $\Delta (w, z)$ can be proved as in \cite[Lemma 4.3]{LS-2}:
$$
 \Re \Delta (w, z_\epsilon)\ -\Re \Delta (w, z)\ \gtrsim\  \epsilon\, 
 $$
where as before, $z_\epsilon = z+\epsilon \N (z)$, and
$$
 |\Delta(w, z_\epsilon) -\Delta(w, z)|\ \lesssim\ \epsilon\, 
$$
for any $w$, $z\in\bndry D$.
\subsection{The Cauchy-Leray integral for the domain the domain \eqref{E:star3}.}
Suppose that $f\in C^1(\bndry\Dm)$ and set
\begin{equation}\label{E:CL-Dm-old}
\Cin f(z) = \int\limits_{w\in\bndry\Dm} \frac{f(w)}{\Delta^2 (w, z)}\, d\lambda (w)\, \quad z\in \Dm
\end{equation}
with $d\lambda (w) = j^*(\dee\r (w)\wedge \deebar\dee\r (w))$.
Then in fact
$$
\Cin (f)(z) = \int\limits_{w\in\bndry\Dm}\!\!\!\!\! f(w)\, j^*\bigg(\!\eta(w, z)\wedge \deebar\, \eta (w, z)\bigg)
$$
where
$$
\eta(w, z)\ :=\ \frac{\dee\r (w)}{\Delta (w, z)}
$$
is a so-called {\em generating form for $\Dm$}, see \cite[Sections 4.1 and 9.2]{LS-3}. The Cauchy-Fantappi\`e theory then grants that
$$
\Cin (g)(z) = g(z)\,,\quad z\in \Dm
$$
whenever $g$ is holomorphic in $\Dm$ and continuous on $\bar\Dm$, 
see \cite[Section 5]{LS-3}. In particular choosing $g(z) =1$ (the constant 1) we have that
$$
\Cin (1) (z) = 1\, \quad \text{for}\ z\in \Dm\quad \text{and\ thus\ for}\ \ z\in\bar\Dm\, ,
$$
and from this it follows that 
\begin{equation}\label{E:CL-Dm}
\Cin (f)(z) = \int\limits_{w\in\bndry\Dm}\!\!\! \frac{f(w)-f(z)}{\Delta^2 (w, z)}\, d\lambda (w)\, +\ f(z)\, ,\quad z\in \Dm\, .
\end{equation}
\begin{Prop}\label{P:Dm-case1} Let $\Dm$ be the domain \eqref{E:star3} with $3/2<m<2$. 
 Suppose that $f\in C^1(\bndry\Dm)$. Then $\Cin (f)$ extends to a continuous function in $\bar\Dm$. More precisely, for 
$z\in\bndry\Dm$ set
$$z_\epsilon = z +\epsilon\, \N (z)\ \in \Dm\, .$$ 
Then, we have that  $\Cin (f) (z_\epsilon)$ converges uniformly in
 $z\in \bndry\Dm$ to a limit that we denote $\Ctr (f)(z)$. Furthermore, we have that
 such limit admits the representation
 $$
 \Ctr (f)(z)\ =\ \Ein (df) (z)\ +\ \Rin (f)(z)\ +\ f(z)\, , \quad z\in\bndry\Dm
 $$
 where $\Ein (df)$ and\ \ $\Rin (f)$ are absolutely convergent integarls given explicitly by \eqref{E:Ess} and \eqref{E:Rem} below.
\end{Prop}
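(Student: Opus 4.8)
The plan is to follow the global integration-by-parts scheme of \cite{LS-2} that yields \eqref{E:star6}, starting from the reproducing formula \eqref{E:CL-Dm}. The mechanism is that the Cauchy--Leray kernel is a derivative: since $\dee_{w_2}(1/\Delta) = -(\dee\r/\dee w_2)/\Delta^2$, and more generally $1/\Delta^2$ is recovered by differentiating $1/\Delta$, one transfers a single derivative from the kernel onto $f$ via Stokes' theorem on the closed manifold $\bndry\Dm$. The terms in which the derivative lands on $f$ assemble into $\Ein(df)$, whose kernel carries the mild singularity \eqref{E:sing1}, i.e. $O(1/|\Delta|)$; the terms in which it lands on the kernel produce second derivatives of $\r$ paired with $(w-z)$, and assemble into $\Rin(f)$ with the singularity \eqref{E:sing2}, $O(|z-w|/|\Delta|^2)$; the Cauchy--Fantappi\`e reproducing term contributes $f$. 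First I would carry out this computation explicitly for $\r = |w_2|^2 + |u_1|^m + v_1^2 - 1$, recording the exact kernels \eqref{E:Ess} and \eqref{E:Rem}.

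The essential new feature — the ``critical modification'' — is that $\r$ is only of class $C^{2-\epsilon}$, so its second derivatives blow up on the critical variety $\{u_1=0\}$: both the Levi entry $\dee^2\r/\dee w_1\dee\bar w_1$ and the holomorphic entry $\dee^2\r/\dee w_1^2$ are of size $|u_1|^{m-2}$. Consequently the Leray--Levi density (present in both $\Ein$ and $\Rin$) and the second-derivative factors in $\Rin$ each carry an extra $|u_1|^{m-2}$; schematically the surviving kernels are of the order $|u_1|^{m-2}/|\Delta|$ for $\Ein$ and $|u_1|^{m-2}\,|z-w|/|\Delta|^2$ for $\Rin$. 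The whole point is therefore to show these are still \emph{absolutely integrable, uniformly in $z$}, and this is exactly where $3/2 < m < 2$ enters (Lemma \ref{L:estimates}).

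The heart of the argument is this integrability estimate, and it is where I expect the main obstacle to lie. Strong $\mathbb C$-linear convexity gives, for $w,z\in\bndry\Dm$, the lower bound
\[
\Re\Delta(w,z)\ \approx\ g(u_1,x_1)\ +\ (v_1-y_1)^2 + (u_2-x_2)^2 + (v_2-y_2)^2,
\]
where $g(u_1,x_1) = \frac{m-1}{2}|u_1|^m + \frac12|x_1|^m - \frac m2[u_1]x_1 \ge 0$ (nonnegativity is a Young-type inequality, equality only at $u_1=x_1$), while $\Im\Delta$ supplies a term of \emph{first} order in the CR-transverse direction. The decisive configuration is a point $z$ on the critical variety at which, say, $v_1$ is the dependent graphing variable: there $u_1$ is the transverse direction, $\Im\Delta\approx -(u_1-x_1)$, and hence $|\Delta|\approx |u_1| + (u_2^2+v_2^2)$ when $x_1=0$. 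Integrating the two complex-tangential variables $(u_2,v_2)$ against $|\Delta|^{-2}$ with the numerator $|z-w|$ yields a factor $\sim |u_1|^{-1/2}$, so the remaining one-dimensional integral in the transverse variable is
\[
\int |u_1|^{m-2}\,|u_1|^{-1/2}\,du_1\ =\ \int |u_1|^{m-5/2}\,du_1,
\]
which converges precisely when $m>3/2$. By contrast the $\Ein$-kernel has no vanishing numerator and the same computation produces only a logarithm, so $\Ein(df)$ converges for every $m\in(1,2)$; this matches the weaker conclusion of Proposition \ref{P:Dm-case2}, obtained by retaining instead the milder remainder $|z-w|^2/|\Delta|^2$. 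I would organize the estimate as in \eqref{E:10}--\eqref{E:11}, reducing by the graphing and a parabolic rescaling $u_2,v_2\sim|u_1|^{1/2}$ to a finite model integral, and checking the analogous bound uniformly over the remaining coordinate patches and positions of $z$ (including the transition $x_1,y_1\to 0$).

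Finally, the passage to the boundary is handled exactly as in Section \ref{S:2}. Using the two estimates of Lemma \ref{L:estimates}, $\Re\Delta(w,z_\epsilon)-\Re\Delta(w,z)\gtrsim\epsilon$ and $|\Delta(w,z_\epsilon)-\Delta(w,z)|\lesssim\epsilon$, one dominates the difference of each kernel at $z_\epsilon$ and at $z$ by a multiple of $\epsilon^\beta\,(\Re\Delta(w,z))^{-1-\beta}$ for small $\beta>0$, exactly as in \eqref{E:13}; by the uniform integrability just established this is $\lesssim\epsilon^\beta$. Hence $\Cin(f)(z_\epsilon)$ converges uniformly in $z\in\bndry\Dm$ as $\epsilon\to 0$, giving both the continuous extension to $\bar\Dm$ and the representation $\Ctr(f)=\Ein(df)+\Rin(f)+f$. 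The genuinely hard step is the uniform absolute integrability of the $\Rin$-kernel near the critical variety, where the Levi singularity $|u_1|^{m-2}$ must be balanced against the $|u_1|^{-1/2}$ gain from the complex-tangential directions, forcing $m>3/2$.
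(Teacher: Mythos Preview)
Your overall architecture is right and matches the paper: rewrite $\Cin(f)(z_\epsilon)-f(z)$ via \eqref{E:CL-Dm}, perform a global Stokes integration by parts to produce $\Ein(df)+\Rin(f)$, prove absolute integrability of the resulting kernels, and then use the $\epsilon^\beta$ trick to pass to the boundary uniformly. Two points deserve comment.

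\textbf{A genuine gap: justifying Stokes.} You propose to apply Stokes' theorem on $\bndry\Dm$ to the $2$-form $\frac{1}{\Delta(w,z_\epsilon)}(f(w)-f(z))\,\deebar\dee\r(w)$. But $\deebar\dee\r$ carries the singular coefficient $|u_1|^{m-2}$ on the critical variety $\{u_1=0\}$, so this form is not $C^1$ (not even continuous) and Stokes does not apply as stated. The paper handles this by introducing the smooth approximation $\rg(w)=(u_1^2+\gamma)^{m/2}+v_1^2+|w_2|^2-1$, splitting the integral into $A_\gamma+B_\gamma$ as in \eqref{E:Agamma}--\eqref{E:Bgamma}, applying Stokes legitimately to $B_\gamma$ (which involves the smooth $\deebar\dee\rg$), and then letting $\gamma\to 0$ by dominated convergence. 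This regularization is the ``simple but critical modification'' alluded to in the introduction, and your sketch omits it.

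\textbf{A different route to integrability.} Your absolute-integrability argument for $\Rin$ is not the paper's. You use a single factor $|u_1|^{m-2}$ on the kernel wedge and then exploit $\Im\Delta\approx -(u_1-x_1)$ to get a \emph{linear} lower bound $|\Delta|\gtrsim |u_1|+(u_2^2+v_2^2)$ at $x_1=0$, reducing to the model integral $\int|u_1|^{m-5/2}\,du_1$. The paper instead discards $\Im\Delta$ entirely, uses only $\Re\Delta\ge c|w-z|^2$ from convexity, and bounds the wedge $[\nabla^2\r\cdot(w-z)]\wedge j^*\deebar\dee\r$ by the cruder $O(|u_1|^{2m-4}|w-z|)$ (taking the product of the two $|u_1|^{m-2}$ bounds), so that together with $f(w)-f(z)=O(|w-z|)$ the $\Rin$ integrand is dominated by $|u_1|^{2m-4}/|w-z|^2$. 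This reduces directly to Lemma~\ref{L:estimates} with $\alpha=2m-3$, and the restriction $m>3/2$ enters as $\alpha>0$. The paper's route is more elementary---it needs only the quadratic lower bound on $\Re\Delta$ and the one-parameter family of integrals in Lemma~\ref{L:estimates}, and the uniformity in $z$ (including $x_1\neq 0$) is built into that lemma. Your route via $\Im\Delta$ is sharper in spirit but, as written, is carried out only at $x_1=0$ on the one patch where the transverse direction coincides with $u_1$; on patches where $u_2$ or $v_2$ is the graphing variable the transverse direction (and hence the linear gain from $\Im\Delta$) is in $v_2$ or $u_2$, not in $u_1$, so the mechanism you describe does not directly control the $|u_1|^{m-2}$ singularity there and a separate argument is needed.
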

\begin{proof}
By the above considerations we have that 
$$
\Cin (f)(z_\epsilon) = \int\limits_{w\in\bndry\Dm}\!\!\! \frac{f(w)-f(z)}{\Delta^2 (w, z_\epsilon)}\, d\lambda (w)\, +\ f(z)\quad\text{for\ any}\ \ z\in\bndry\Dm\, .
$$
We will show that 
the quantity
$$
\int\limits_{w\in\bndry\Dm}\!\!\! \frac{f(w)-f(z)}{\Delta^2 (w, z_\epsilon)}\, d\lambda (w)
$$
converges 
 uniformly in $z\in\bndry\Dm$ as $\epsilon\to 0$. To this end, we consider the following
 smooth approximation of $\r$, see \cite[Section 5]{LS-1}:
 \begin{equation}\label{E:rg}
\rg (w) = (u_1^2+\gamma)^{m/2} + v_1^2 + u_2^2 +v_2^2 -1\, .
\end{equation}
 Then  for any $0<\gamma<1$ we have
 
 $$
\int\limits_{w\in\bndry\Dm}\!\!\! \frac{f(w)-f(z)}{\Delta^2 (w, z_\epsilon)}\, d\lambda (w)\ =\  A_\gamma (z_\epsilon)\ +\ B_\gamma (z_\epsilon)
 $$
 with
 \begin{equation}\label{E:Agamma}
 A_\gamma (z_\epsilon) =\ 
 -\frac{1}{4\pi^2}\!\!\! \!\!\! 
 \int\limits_{w\in\bndry\Dm}\!\!\! \frac{f(w)-f(z)}{\Delta^2 (w, z_\epsilon)}\, 
 j^*\big(\dee\r \wedge\deebar\dee(\r-\rg )\big)(w)\,.
 \end{equation}
and
 \begin{equation}\label{E:Bgamma}
 B_\gamma (z_\epsilon) =\ -\frac{1}{4\pi^2}\!\!\! \!\!\! 
 \int\limits_{w\in\bndry\Dm}\!\!\! \frac{f(w)-f(z)}{\Delta^2 (w, z_\epsilon)}\, 
 j^*\big(\dee\r (w)\wedge\deebar\dee\rg (w)\big)\, .
 \end{equation}
 The dominated convergence theorem grants that 
  \begin{equation}
  A_\gamma (z_\epsilon)\ \to\ 0\quad \text{as}\ \ \gamma\to 0\, 
  \end{equation}
  for each fixed $z_\epsilon$.
 Next we deal with the term $B_\gamma (z_\epsilon)$, again assuming that 
 $z_\epsilon$ has been fixed. Applying Stokes' theorem to 
 the manifold $M=\bndry\Dm$ (which has $\bndry M=\emptyset$) we obtain that
 $$
 0\ =\ 
 \int\limits_{w\in\bndry\Dm}\!\!\! 
 d_w\, j^*\!\left(
 \frac{1}{\Delta (w, z_\epsilon)}\, \big(f(w)-f(z)\big)\, \deebar\dee\rg (w)
 \right)\, .
 $$
 Thus (since $d\,\deebar\dee\rg \equiv 0$) we obtain
 $$
 0= \int\limits_{w\in\bndry\Dm}\!\!\! \!\!
 \frac{1}{\Delta (w, z_\epsilon)}\, j^*(df(w)\wedge\deebar\dee\rg (w))\ +
 $$
 $$
 -\!\!\! \!\!\int\limits_{w\in\bndry\Dm}\!\!\! \frac{f(w)-f(z)}{\Delta^2 (w, z_\epsilon)}
 j^*\big[d_w(\Delta (w, z_\epsilon)\big]\wedge\deebar\dee\rg (w)\, .
 $$
 But 
 $$
 d_w\Delta (w, z_\epsilon)\ =\ \dee\r (w) + \nabla^2\r (w)\cdot (w-z)
 $$
 
 where $\nabla^2\r (w)\cdot (w-z)$ is short-hand for the 1-form
 $$
 \sum\limits_{j=1}^2\left(\frac{\dee^2\r(w)}{\dee w_j\dee w_1}\, dw_1 +
 \frac{\dee^2\r(w)}{\dee w_j\dee w_2}\, dw_2\right)(w_j-z_j)\, .
 $$
 It follows that
 $$
 B_\gamma (z_\epsilon)\, =\!\!\! 
  \int\limits_{w\in\bndry\Dm}\!\!\! 
 \frac{j^*(df(w)\wedge\deebar\dee\rg (w))}{\Delta (w, z_\epsilon)}\ +
 $$
 $$
 - \!\!\int\limits_{w\in\bndry\Dm}\!\!\! \frac{f(w)-f(z)}{\Delta^2 (w, z_\epsilon)}
\ \nabla^2\!\r (w)\!\cdot\! (w-z)\wedge j^*\deebar\dee\rg(w)\, .
 $$
Invoking  the dominated convergence theorem one more time,
we  obtain 
 $$
 \lim\limits_{\gamma\to 0}\,B_\gamma (z_\epsilon)\, =\!\!\! 
  \int\limits_{w\in\bndry\Dm}\!\!\! 
 \frac{j^*(df(w)\wedge\deebar\dee\r (w))}{\Delta (w, z_\epsilon)}\ +
 $$
 $$
 - \!\!\int\limits_{w\in\bndry\Dm}\!\!\! \!\frac{f(w)-f(z)}{\Delta^2 (w, z_\epsilon)}
\ \nabla^2\!\r (w)\!\cdot\! (w-z)\wedge j^*\deebar\dee\r(w)\, .
 $$

 Combining all of the above we conclude that
 $$
 \Cin (f)(z_\epsilon) - f(z) \ =\ \Ein (df)(z_\epsilon) + \Rin (f)(z_\epsilon) 
 $$
 where we have set
 $$
\Ein (df)(z_\epsilon)\ = \int\limits_{w\in\bndry\Dm}\!\!\! 
 \frac{df(w)\wedge j^*(\deebar\dee\r (w))}{\Delta (w, z_\epsilon)}\, ,
 $$
 and
 $$
 \Rin (f)(z_\epsilon)\ =\ -\!\!\!\!
 \int\limits_{w\in\bndry\Dm}\!\!\! \frac{f(w)-f(z)}{\Delta^2 (w, z_\epsilon)}
\ \nabla^2\!\r (w)\!\cdot\! (w-z)\wedge j^*\deebar\dee\r(w)\,.
 $$
 
 Define
 \begin{equation}\label{E:Ess}
 \Ein (df) (z):=\!\!\!\int\limits_{w\in\bndry\Dm}\!\!\! 
 \frac{df(w)\wedge j^*(\deebar\dee\r (w))}{\Delta (w, z)}\, , \quad z\in\bndry\Dm
 \end{equation}
 and
 \begin{equation}\label{E:Rem}
 \Rin (f) (z):=-\!\!\!
 \int\limits_{w\in\bndry\Dm}\!\!\! \!\frac{f(w)-f(z)}{\Delta^2 (w, z)}
\ \nabla^2\!\r (w)\!\cdot\! (w-z)\wedge j^*\deebar\dee\r(w)\, , \, z\in\bndry\Dm\, .
 \end{equation}
 We shall next see that for any $z\in\bndry\Dm$, each of $\Ein (f) (z)$ and $\Rin (f) (z)$ is an absolutely convergent integral, and this in turn will grant that
 
 \begin{equation}\label{E:ER-meaning}
 |\Ein (df) (z)|<\infty;\quad  |\Rin (f) (z)|<\infty\quad \text{for\ any}\ z\in\bndry\Dm.
 \end{equation}
 \smallskip
 
  To prove these assertions we recall that

 \begin{equation}\label{E:convex}
 \Re \Delta(w, z)\geq c|w-z|^2\quad \text{for any}\ z\in\bndry\Dm\quad \text{and}\  w\in\bndry\Dm
 \end{equation}
\smallskip
 
by the convexity of $\Dm$, and that 

 $$
 j^*(\deebar\dee\r (w))\ =\ O(|u_1|^{m-2})\quad \text{and}\quad |\nabla^2\r (w)|= O(|u_1|^{m-2}).
 $$
 \smallskip
 
 Thus, since $f$ is supported in a coordinate patch of the first kind, writing $x_1=\Re z_1$ we have
 
 $$
 |\Ein (df)(z)|\leq C\!\!\!\!\!\!\int\limits_{w\in\bndry\Dm}\!\!\!\frac{|df (w)|\, |u_1|^{m-2}}{|w-z|^2}
  d\sigma (w)
 \leq
 C\!\!\!\!\!\!\iiint\limits_{u_1^2+u_2^2+v_2^2<1}\!\frac{|u_1-x_1|^{m-2}}{u_1^2+u_2^2+v_2^2}\ \, du_1\, du_2\, dv_2.
 $$
 
 The desired finiteness of $\Ein (df)(z)$ now follows from 
  Lemma \ref{L:estimates}
 below applied with $\beta =0$ and $\alpha:= m-1\in (0, 1)$ (in fact our hypothesis that $3/2<m<2$ gives that $\alpha$ is in $(1/2, 1)$).
 \medskip
 
 Similarly we have, by \eqref{E:Rem}, \eqref{E:convex} and the fact that $ f(w)-f(z) = O(|w-z|)$, that
 
 $$
 |\Rin (f)(z)|\leq C\!\!\!\!\!\!\int\limits_{w\in\bndry\Dm}\!\!\!\frac{|f (w)|\,|u_1|^{2m-4}}{|w-z|^2}
 \ d\sigma (w)
 \leq C\!\!\!\!\!\!\iiint\limits_{u_1^2+u_2^2+v_2^2<1}\!\frac{|u_1-x_1|^{2m-4}}{u_1^2+u_2^2+v_2^2}\ \, du_1\, du_2\, dv_2.
 $$
 
 The finiteness of  $\Rin (f)(z)$ is again a consequence of
    Lemma \ref{L:estimates}
 below applied with $\beta =0$ and $\alpha:= 2m-3$ (which is in $(0, 1)$ thanks to our assumption that
 $3/2<m<2$).
 \smallskip

To conclude the proof of the proposition we are left to show that
of $\Ein(df) (z_\epsilon)$ and $\Rin(f) (z_\epsilon)$ converge respectively to $\Ein(df) (z)$ and 
$\Rin(f) (z)$ uniformly in $z\in\bndry\Dm$
 (in fact, absolutely and uniformly in $z\in\bndry\Dm$). 
 To prove the convergence of $\Ein(df) (z_\epsilon)$, note that
 \bigskip
 $$
 |\, \Ein(df) (z_\epsilon) -\Ein (df)(z)\,|\, \leq C\!\!\!
 \int\limits_{w\in\bndry\Dm}\!\!\!\!|u_1|^{m-2}
 \left|\frac{1}{\Delta (w, z_\epsilon)}-\frac{1}{\Delta (w, z)}\right| d\sigma(w)\, .
 $$
 Now by the basic estimate for $\Delta (w, z_\epsilon)$ we have that
 
 $$
 \left|\frac{1}{\Delta (w, z_\epsilon)}-\frac{1}{\Delta (w, z)}\right|\leq \
 \frac{c_0\,\epsilon}{\Re \Delta (w, z_\epsilon)\, \Re \Delta (w, z)}\, .
 $$
 \smallskip
 
 And since the basic estimate for $\Re\Delta (w, z_\epsilon)$ in particular  gives $\epsilon \lesssim \Re \Delta (w, z_\epsilon)$, we also have that

 $$
 \epsilon^{1-\beta}\lesssim \Re \Delta (w, z_\epsilon)^{1-\beta}\quad \text{for\ any}\  \
 0\leq \beta<1\, .
 $$
 
 Inserting this in the above (and using once again the basic estimate: $\Re \Delta (w, z)\lesssim\Re \Delta (w, z_\epsilon)$) we obtain
 
 $$
 |\,\Ein(f) (z_\epsilon) -\Ein (f)(z) \, | \leq C\,\epsilon^\beta\!\!\!\!\!
\int\limits_{w\in\bndry\Dm}\!\!\!\frac{|u_1|^{m-2}}{\Re \Delta (w, z)^{1+\beta}}\, \, d\sigma(w)  .
 $$
 
 Invoking one more time the convexity of $\Dm$ we conclude that the above integral is further bounded by
 
  $$
 C\,\epsilon^\beta\!\!\!\!\!\int\limits_{w\in\bndry\Dm}\!\!\!\!\!\!\frac{|u_1|^{m-2}}{|w-z|^{2+2\beta}}\, d\sigma(w)\leq
 C\epsilon^\beta\!\!\!\!\!\iiint\limits_{u_1^2+u_2^2+v_2^2<1}\!
 \frac{|u_1-x_1|^{m-2}}{(u_1^2+u_2^2+v_2^2)^{1+\beta}}\ \, du_1\, du_2\, dv_2.
$$
 The desired conclusion now follows by
  Lemma \ref{L:estimates} with $\alpha = m-1$ and with any
$0<\beta<(m-1)/2$.  
  
  Finally, we claim that 
 \begin{equation}\label{E:Rem-conv}
 \Rin(f) (z_\epsilon) -\Rin (f)(z)\to 0\quad \text{as}\ \ \epsilon\to 0
 \end{equation}
 uniformly in $z\in\bndry\Dm$. To see this we begin as before, with
 \bigskip
 $$
 |\, \Rin(f) (z_\epsilon) -\Rin (f)(z)\,|\, 
 \leq C\!\!\!
 \int\limits_{w\in\bndry\Dm}\!\!\!\!\!|u_1|^{2m-4}\,|w-z|^2
 \left|\frac{1}{\Delta^2 (w, z_\epsilon)}-\frac{1}{\Delta^2 (w, z)}\right| d\sigma(w)\, .
 $$
 
 By the basic estimate for $\Delta (w, z)$ we see that
 
 $$
 \left|\frac{1}{\Delta^2 (w, z_\epsilon)}-\frac{1}{\Delta^2(w, z)}\right|\leq \
 \frac{c_1\epsilon\,
  |\Delta (w, z_\epsilon)|}{|\Delta^2 (w, z_\epsilon)\, \Delta^2 (w, z)|}\ +\ 
\frac{c_1\epsilon\,
 |\Delta (w, z)|}{|\Delta^2 (w, z_\epsilon)\, \Delta^2 (w, z)|}\ \leq\ 
$$

$$
\leq 
\frac{c_1\epsilon
  }{\Re \Delta (w, z_\epsilon) (\Re \Delta (w, z))^2}\ +\ 
\frac{
c_1\epsilon
 }{(\Re \Delta (w, z_\epsilon))^2\, \Re \Delta (w, z)}\ \leq
$$

$$
\leq 
\frac{2\,c_1\epsilon
  }{\Re \Delta (w, z_\epsilon) (\Re \Delta (w, z))^2}\ =\ 
 \frac{2\,c_1\,\epsilon^\beta\, \epsilon^{1-\beta}
 }{\Re \Delta (w, z_\epsilon) (\Re \Delta (w, z))^2}\, .
$$
\smallskip
 
 \noindent Using again the trick: $\epsilon^{1-\beta}\leq (\Re \Delta (w, z_\epsilon))^{1-\beta}$
for any  $0\leq \beta<1$, we bound the latter with

$$
 \frac{2\,c_1\,\epsilon^\beta
 }{(\Re \Delta (w, z_\epsilon))^\beta (\Re \Delta (w, z))^2}\, \leq\ 
  \frac{2\,c_1\,\epsilon^\beta
  }{ (\Re \Delta (w, z))^{2+\beta}}.
$$
\smallskip

Finally, by the convexity of $\Dm$ we conclude that
 $$
 \left|\frac{1}{\Delta^2 (w, z_\epsilon)}-\frac{1}{\Delta^2(w, z)}\right|\leq \
 C\,
 \frac{\epsilon^\beta
 }{ |w-z|^{4+2\beta}}\, .
 $$
 
 Combining all of the above we conclude that
 
 $$
 |\, \Rin(f) (z_\epsilon) -\Rin (f)(z)\,|\, 
 \leq C\, \epsilon^\beta\!\!\!\!\!
 \int\limits_{w\in\bndry\Dm}\!\!\!\!\!\!\,
  \frac{|u_1|^{2m-4}
  }{ |w-z|^{2+2\beta}}\, d\sigma (w)\ \leq\ 
 $$
 
 $$
 \leq C\epsilon^\beta\!\!\!\!\!\iiint\limits_{u_1^2+u_2^2+v_2^2<1}\!\!\!\!\!\!\,
 \frac{|u_1-x_1|^{2m-4}}{(u_1^2+u_2^2+v_2^2)^{1+\beta}}\ du_1\, du_2\, dv_2\, .
 $$
 
  The desired conclusion now follows by
  applying 
   Lemma \ref{L:estimates} with $\alpha = 2m-3$ (which is in $(0, 1)$ thanks to our hypothesis that $3/2<m<2$) and with any
$0<\beta<\alpha/2=m-3/2$. 

This concludes the proof of the Proposition (assuming the truth of Lemma \ref{L:estimates}, whose proof is given below).
\end{proof}

 \begin{Lem}\label{L:estimates} Suppose that $|x_1|\leq 1$. Then we have that
 
 $$\displaystyle{I_{\alpha, \beta}:=\iiint\limits_{u_1^2+u_2^2+v_2^2<1}\!
 \frac{|u_1-x_1|^{-1+\alpha}}{\big(u_1^2+u_2^2+v_2^2\big)^{1+\beta}}\ \, du_1\, du_2\, dv_2}\ \leq A_{\alpha, \beta}<\infty$$
 \medskip
 
  \noindent is true for any $\alpha\in (0, 1]$ and any $0\leq\beta<\alpha/2$.
 The  constant $A_{\alpha, \beta}$ is independent of $x_1$.

 \end{Lem}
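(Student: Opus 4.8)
The plan is to collapse the triple integral to a one-dimensional one by performing the integration in the two variables $u_2,v_2$ first, exactly as in the proof of \eqref{E:11}. For fixed $u_1$ with $|u_1|<1$ the cross-section in $(u_2,v_2)$ is a disc of radius at most $1$, so when $\beta>0$ I would bound the inner integral by the one over all of $\mathbb{R}^2$ and rescale $u_2=|u_1|u_2'$, $v_2=|u_1|v_2'$ to obtain
$$
\iint_{\mathbb{R}^2}\frac{du_2\,dv_2}{(u_1^2+u_2^2+v_2^2)^{1+\beta}}=C_\beta\,|u_1|^{-2\beta},\qquad \beta>0.
$$
For the borderline value $\beta=0$ the integral over $\mathbb{R}^2$ diverges, but integrating over the disc of radius $\le 1$ in polar coordinates yields instead the logarithmic bound $\iint\lesssim 1+\big|\log|u_1|\big|$, which for the purpose of what follows can be absorbed into $|u_1|^{-\delta}$ for any fixed small $\delta>0$ (since $1+\big|\log|u_1|\big|\lesssim |u_1|^{-\delta}$ for $|u_1|\le 1$).

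After this reduction the assertion $I_{\alpha,\beta}\le A_{\alpha,\beta}$ becomes the one-dimensional estimate
$$
\int_{|u_1|\le 1}|u_1-x_1|^{-1+\alpha}\,|u_1|^{-2\beta}\,du_1\ \le\ A_{\alpha,\beta},
$$
with the constant independent of $x_1$ (and the analogous statement with $|u_1|^{-2\beta}$ replaced by $1+\big|\log|u_1|\big|$ when $\beta=0$). Writing $p=2\beta$ and $q=1-\alpha$, the integrand is a product of two power singularities, of orders $p$ and $q$, located at $u_1=0$ and $u_1=x_1$; each is individually integrable since $p=2\beta<1$ and $q=1-\alpha<1$. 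The crucial point is that the hypothesis $\beta<\alpha/2$ is exactly $p+q<1$, and this is precisely the condition guaranteeing integrability when the two singularities collide, i.e. as $x_1\to 0$.

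I would establish the uniform bound by decomposing $[-1,1]$ according to the distance $d=|x_1|$, using the symmetry $x_1\mapsto-x_1$ (which leaves $|u_1|^{-p}$ unchanged) to assume $x_1=d\ge 0$. On $\{|u_1|\le d/2\}$ one has $|u_1-x_1|\approx d$, so the factor $|u_1-x_1|^{-q}$ is $\approx d^{-q}$ and the residual integral of $|u_1|^{-p}$ contributes $\lesssim d^{\,1-p-q}$; symmetrically, on $\{|u_1-x_1|\le d/2\}$ one has $|u_1|^{-p}\approx d^{-p}$ and again gets $\lesssim d^{\,1-p-q}$. On the complement, where $|u_1|>d/2$ and $|u_1-x_1|>d/2$, the subregion $|u_1|\le 2d$ has measure $\lesssim d$ and integrand $\approx d^{-(p+q)}$, contributing $\lesssim d^{\,1-p-q}$, while on $|u_1|>2d$ one has $|u_1-x_1|\ge |u_1|/2$, so the integrand is dominated by $|u_1|^{-p-q}$ whose integral over $[-1,1]$ equals the finite number $2/(1-p-q)$. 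Since $d\le 1$ and $1-p-q>0$, every piece is bounded by a constant depending only on $\alpha,\beta$; the degenerate case $d=0$ reduces directly to $\int_{-1}^{1}|u_1|^{-p-q}\,du_1=2/(1-p-q)$.

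The main obstacle is the uniformity in $x_1$, precisely in the regime where $x_1$ is near $0$ and the two singularities merge: this is exactly where the strict inequality $\beta<\alpha/2$ is indispensable, since at $\beta=\alpha/2$ the merged singularity $|u_1|^{-1}$ ceases to be integrable and the constant blows up. In the case $\beta=0$ the only additional care needed is to replace the logarithm produced in the first step by a harmless power $|u_1|^{-\delta}$ with $0<\delta<\alpha$, after which the identical region decomposition applies verbatim with $p=\delta$, again using $p+q=\delta+(1-\alpha)<1$.
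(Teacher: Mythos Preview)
Your argument is correct and follows essentially the same route as the paper's proof: first integrate out $(u_2,v_2)$ via the rescaling $u_2=|u_1|u_2'$, $v_2=|u_1|v_2'$, then decompose the resulting one-dimensional integral according to the position of $u_1$ relative to $0$ and to $x_1$, using $2\beta+(1-\alpha)<1$. The only minor difference is in the handling of $\beta=0$: rather than introducing a logarithm and then absorbing it into $|u_1|^{-\delta}$, the paper simply notes that on the unit ball $I_{\alpha,0}\le I_{\alpha,\beta}$ (since $(u_1^2+u_2^2+v_2^2)^{1+\beta}\le u_1^2+u_2^2+v_2^2$ there), which reduces the endpoint case directly to any $0<\beta<\alpha/2$.
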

 \begin{proof}
Note that 
$I_{\alpha, 0}\leq I_{\alpha, \beta}$ for any $\beta>0$, thus we only need prove the conclusion 
in the case when $0<\beta<\alpha/2$.

 To begin with, we write
 $$
 \iiint\limits_{u_1^2+u_2^2+v_2^2<1}\!
 \frac{|u_1-x_1|^{-1+\alpha}}{\big(u_1^2+u_2^2+v_2^2\big)^{1+\beta}}\ \, du_1\, du_2\, dv_2\leq
 $$
 $$
= \int\limits_{|u_1|\leq 2}|u_1-x_1|^{-1+\alpha}
 \left(\ \, \iint\limits_{\mathbb R^2}
 \frac{1}{\big(u_1^2+u_2^2+v_2^2\big)^{1+\beta}}
 du_2\, dv_2\right)\, du_1\, .
 $$
 
 Notice that the change of variables: $u_2:= |u_1|\,\tilde u_2$
  and $v_2:= |u_1|\,\tilde v_2$, gives
 
  $$
  \iint\limits_{\mathbb R^2} \!\! \frac{1}{\big(u_1^2+u_2^2+v_2^2\big)^{1+\beta}}\, du_2\, dv_2  \ =\ 
 |u_1|^{-2\beta}\!\!\iint\limits_{\mathbb R^2} \!\! 
 \frac{1}{\big(1+\tilde u_2^2+\tilde v_2^2\big)^{1+\beta}}\, d\tilde u_2\, d\tilde v_2\leq c\,
  |u_1|^{-2\beta}
  $$
  
  with $c=c(\beta)$ (in particular $C$ is independent of $u_1$).
  Thus
  $$
  I_{\alpha, \beta}\, \leq c\!\!\! \int\limits_{|u_1|\leq\, 2}\!\!\! |u_1|^{-2\beta}|u_1-x_1|^{-1+\alpha}\, du_1\ =\  I\ +\ II\ +\  III\, ,
  $$
  where we have set
  $$
  I:=\!\!\!\!\!\!  \int\limits_{|u_1|\leq |x_1|/2}\!\!\!\!\!\!\!\!\! |u_1|^{-2\beta}|u_1-x_1|^{-1+\alpha}\, du_1\, ;\quad
   II :=\!\!\!\!\!\!\!\!\!\!\!\!  \int\limits_{|x_1|/2<|u_1|\leq 2\,|x_1|}\!\!\!\!\!\!\!\!\!\!\!\!\!\!\! |u_1|^{-2\beta}|u_1-x_1|^{-1+\alpha}\, du_1\, ,
  $$
  
  and
  
  $$
 III :=\!\!\!\!\!\!  \int\limits_{|u_1|>2\, |x_1|}\!\!\!\!\! |u_1|^{-2\beta}|u_1-x_1|^{-1+\alpha}\, du_1\, .
  $$
  \smallskip
  
  Now the active integrand factor for $A$ is $|u_1|^{-2\beta}$, that is
  $$
  I\leq |x_1|^{-1+\alpha}\!\!\!\!\!\int\limits_{|u_1|\leq |x_1|/2}\!\!\!\!\!\!\!\!\! |u_1|^{-2\beta}\, du_1
 = a |x_1|^{-1+\alpha} |x_1|^{1-2\beta} = a\, |x_1|^{\alpha -2\beta} = \ O(1)$$
 
 since $\alpha>2\beta$ (and $a=a(\beta)$, that is $a$ is independent of $x_1$).
\medskip
 
 On the other hand, the active integrand factor in $II$ is $|u_1-x_1|^{-1+\beta}$, that is
 
 $$
 II\leq \ 
 |x_1|^{-2\beta}\!\!\!\!\!\!\!\!\!\!\!\!  \int\limits_{|x_1|/2<|u_1|\leq 2\,|x_1|}\!\!\!\!\!\!\!\!\!\!\!\!\!\!\!
 |u_1-x_1|^{-1+\alpha}\, du_1
 \leq 
 |x_1|^{-2\beta}\!\!\!\!\!\!\!\int\limits_{|u_1|\leq 3|x_1|}\!\!\!\! |u_1|^{-1+\alpha}\,du_1 =
 b\, |x_1|^{-2\beta +\alpha} = \ O(1)\, 
 $$
 (again, here $b=b(\alpha)$ is independent of $x_1$).
 \medskip
 
 Finally
 
 $$
 III\ \leq \!\!\!\!\!\!  \int\limits_{|u_1|>2\, |x_1|}\!\!\!\!\! |u_1|^{-2\beta}|u_1|^{-1+\alpha}\, du_1
 \leq \int\limits_0^\infty |u_1|^{-1+\alpha-2\beta}du_1=c <\infty
 $$
 again because $\alpha-2\beta\in (0, 1)$ (and with $c=c (\alpha, \beta)$ independent of $x_1$).
 \end{proof}
 
While we are unable to prove that the Cauchy-Leray integral for the domain \eqref{E:star3} extends to a continuous function on the entire closure  $\bar \Dm$ when $1<m\leq 3/2$, we have the following result, valid for each $1<m<2$, whose proof will appear elsewhere.
\begin{Prop}\label{P:Dm-case2}
 Suppose that $f\in C^1(\bndry\Dm)$. Then $\Cin (f)$ extends to a continuous function in $\bar\Dm\setminus\big\{\bndry\Dm\, \cap\, \{x_1=0\}\big\}$. More precisely, for 
$z\in\bndry\Dm$ set
$$z_\epsilon = z +\epsilon\, \N (z)\ \in \Dm\, .$$ 

Then, we have that  $\Cin (f) (z_\epsilon)$ converges uniformly in
 $z\in \bndry\Dm \setminus \{ x_1=0\}$ to a limit that we denote $\Ctr (f)(z)$ which  satisfies
$$
|\Ctr (f) (z)|\leq C|x_1|^{-2+m}\, .
$$
\end{Prop}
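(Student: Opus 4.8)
The plan is to retrace the architecture of the proof of Proposition \ref{P:Dm-case1}, tracking at every stage the dependence of the estimates on the distance $|x_1|$ of $z$ from the critical variety $\bndry\Dm\cap\{x_1=0\}$. First I would write $f$ as a finite sum of $C^1$ functions each supported in a single coordinate patch of the first or second kind, as at the start of Section \ref{S:3}. On a patch of the second kind the boundary is uniformly of class $C^2$ and $\deebar\dee\r$ is bounded, so the argument of Proposition \ref{P:Dm-case1} (equivalently, that of Section \ref{S:2}) applies without change and produces a contribution that converges uniformly to a bounded continuous limit, and so contributes nothing to the blow-up as $x_1\to0$. This reduces matters to a single patch of the first kind, where I would combine the representation \eqref{E:CL-Dm} with the Stokes integration-by-parts identity from the proof of Proposition \ref{P:Dm-case1} to obtain
$$\Cin(f)(z_\epsilon)-f(z)=\Ein(df)(z_\epsilon)+\Rin(f)(z_\epsilon),\qquad z\in\bndry\Dm,$$
with $\Ein$ and $\Rin$ as in \eqref{E:Ess} and \eqref{E:Rem}.

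The term $\Ein(df)$ is already under control for the whole range $1<m<2$. Its kernel carries the single degenerate factor $|u_1|^{m-2}$ from $j^*\deebar\dee\r$ against the first-order singularity $1/\Delta$, so by \eqref{E:convex} its modulus is dominated by the model integral $I_{m-1,0}$ of Lemma \ref{L:estimates} with exponent $\alpha=m-1\in(0,1)$; thus $\Ein(df)(z)$ is absolutely convergent and bounded uniformly in $z$, while $\Ein(df)(z_\epsilon)-\Ein(df)(z)$ is controlled by a multiple of $\epsilon^\beta I_{m-1,\beta}$ for any $0<\beta<(m-1)/2$, exactly as before. So $\Ein$ is not responsible for the factor $|x_1|^{m-2}$.

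The decisive term is $\Rin(f)$, and it is here that the range $m\le3/2$ genuinely differs. The crude estimate of Proposition \ref{P:Dm-case1}, which bounds each of $j^*\deebar\dee\r$ and $\nabla^2\r$ by $|u_1|^{m-2}$, produces the weight $|u_1|^{2m-4}$, i.e. the exponent $\alpha=2m-3$ of Lemma \ref{L:estimates}; once $m\le3/2$ this is $\le0$ and $|u_1|^{2m-4}$ fails to be integrable across $\{u_1=0\}$. The plan is to avoid this loss by using the precise structure of $\nabla^2\r\cdot(w-z)\wedge j^*\deebar\dee\r$ for the defining function $\r=|w_2|^2+|u_1|^m+v_1^2-1$. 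Because $\r$ separates in the two variables, $\nabla^2\r\cdot(w-z)$ reduces to a single term proportional to $dw_1$, whereas the most singular part of $j^*\deebar\dee\r$ is the coefficient of $dw_1\wedge d\bar w_1$; since $dw_1\wedge(dw_1\wedge d\bar w_1)=0$, this part is annihilated and only the bounded coefficient of $dw_2\wedge d\bar w_2$ survives. Hence the effective weight in $\Rin$ is $|u_1|^{m-2}$, not $|u_1|^{2m-4}$, and by \eqref{E:convex} $|\Rin(f)(z)|$ is once more dominated by $I_{m-1,0}$; this yields the absolute convergence of $\Rin(f)(z)$ for each $z$ with $x_1\neq0$, together with the uniform bound $I_{m-1,0}\le A_{m-1,0}$. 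Since $|x_1|^{m-2}\ge1$ for $|x_1|\le1$, the stated estimate $|\Ctr(f)(z)|\le C|x_1|^{m-2}$ follows.

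The convergence $\Cin(f)(z_\epsilon)\to\Ctr(f)(z)$, uniform in $z$ with $x_1\neq0$, would then follow as in Proposition \ref{P:Dm-case1}: the two basic estimates for $\Delta(w,z_\epsilon)$ recorded just after \eqref{E:v_1} reduce $\Ein(df)(z_\epsilon)-\Ein(df)(z)$ and $\Rin(f)(z_\epsilon)-\Rin(f)(z)$ to multiples of $\epsilon^\beta I_{m-1,\beta}$, which vanish as $\epsilon\to0$ by Lemma \ref{L:estimates} with any $0<\beta<(m-1)/2$. The hard part will be to justify the weight reduction rigorously and uniformly up to the critical variety: one must verify that the cancellation $dw_1\wedge dw_1\wedge d\bar w_1=0$ survives the pullback $j^*$ in the graph coordinates adapted to each first-kind patch — including those where $u_2$ or $v_2$, not $v_1$, is the dependent variable — so that no residual power of $|u_1|^{m-2}$ re-enters through the Jacobian of $j^*$, and that the dominated-convergence steps underlying the Stokes identity remain valid with the more singular forms at play when $m\le3/2$. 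This delicate analysis near $\{x_1=0\}$, where the Levi form of $\Dm$ blows up like $|u_1|^{m-2}$, is the point deferred to the companion paper.
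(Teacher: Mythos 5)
You should know at the outset that the paper itself contains no proof of Proposition \ref{P:Dm-case2}: it is stated with the remark that its proof ``will appear elsewhere,'' so there is no in-paper argument to compare yours against, and your proposal must stand on its own. It does not, and the gap sits exactly at your decisive step. Your reduction of the weight in $\Rin$ from $|u_1|^{2m-4}$ to $|u_1|^{m-2}$ rests on the claim that $\nabla^2\rho(w)\cdot(w-z)$ is ``a single term proportional to $dw_1$.'' That takes the displayed shorthand in the proof of Proposition \ref{P:Dm-case1} literally; what actually enters the Stokes computation is the full exterior derivative $d_w\Delta(w,z_\epsilon)-\dee\rho(w)$, and for $\rho=|w_2|^2+|u_1|^m+v_1^2-1$ this equals
\begin{equation*}
(w_1-z_1)\Big(\rho_{w_1w_1}\,dw_1+\rho_{\bar w_1 w_1}\,d\bar w_1\Big)\;+\;(w_2-z_2)\,d\bar w_2\,,
\qquad \rho_{\bar w_1 w_1}=\tfrac14\big(m(m-1)|u_1|^{m-2}+2\big),
\end{equation*}
so the anti-holomorphic Hessian term carries exactly the same singularity $|u_1|^{m-2}$ as the holomorphic one. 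The kernel of $\Rin$ is therefore \emph{not} proportional to $dw_1$, and the single identity $dw_1\wedge d\bar w_1\wedge dw_1=0$ does not dispose of the doubly singular products: one must also check that the pairing of $(w_1-z_1)\rho_{\bar w_1w_1}\,d\bar w_1$ with the singular part $\rho_{\bar w_1w_1}\,d\bar w_1\wedge dw_1$ of $\deebar\dee\rho$ dies because $d\bar w_1$ is repeated, and that the surviving cross terms (e.g.\ $(w_2-z_2)\,d\bar w_2\wedge\rho_{\bar w_1w_1}\,d\bar w_1\wedge dw_1$) still carry a full factor $|u_1|^{m-2}$. It happens that for this separated $\rho$ the completed algebra does leave only single powers of $|u_1|^{m-2}$; but that completed computation is the crux of the whole proposal, it is missing from your writeup, and the portion you do write is based on a wrong description of the kernel. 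Moreover you yourself defer precisely the remaining verifications (the wedge/pullback bookkeeping on every first-kind patch, including those where $u_2$ or $v_2$ is the dependent variable, and the legitimacy of the Stokes and dominated-convergence steps when the coefficients of $d_w\Delta$ are merely integrable), so as submitted this is a program, not a proof.

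There is also a structural red flag you should confront. If your cancellation argument were fully justified, it would yield convergence and a bound \emph{uniform over all of} $\bndry\Dm$, for every $1<m<2$ --- that is, it would extend Proposition \ref{P:Dm-case1} to the full range and prove continuity on all of $\bar\Dm$, which is precisely what the authors state they are \emph{unable} to prove for $1<m\le 3/2$. Consistently with this, your stated estimate $|\Ctr(f)(z)|\le C|x_1|^{-2+m}$ is obtained vacuously from $|x_1|^{m-2}\ge 1$: your argument never actually produces the blow-up rate appearing in the proposition. A proof in the spirit the authors presumably intend would instead retain the crude weight $|u_1|^{2m-4}$ --- for which Lemma \ref{L:estimates} fails, since $\alpha=2m-3\le 0$ when $m\le 3/2$ --- and quantify how the resulting divergence degrades as $\dist(z,\{x_1=0\})=|x_1|\to 0$, which is where the factor $|x_1|^{m-2}$ and the restriction to $\bndry\Dm\setminus\{x_1=0\}$ genuinely come from. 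So either you have strengthened the theorem beyond what the authors claim --- in which case every deferred step must be carried out and checked before the claim can stand --- or one of those steps fails; in either case the proposal, as it stands, does not prove Proposition \ref{P:Dm-case2}.
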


\end{document}